\documentclass[12pt,reqno]{amsart}
\usepackage{geometry}
\usepackage[latin1]{inputenc}
\usepackage[italian,english]{babel}
\usepackage{amsmath, amsfonts, amsthm, xcolor}
\usepackage{paralist}
\numberwithin{equation}{section}
\geometry{a4paper}
\usepackage{mathtools, mathabx, verbatim}


\newtheorem{thm}{Theorem}[section]
\newtheorem{lem}[thm]{Lemma}

\newtheorem{prop}[thm]{Proposition}

\theoremstyle{definition}
\newtheorem{rem}[thm]{Remark}
\theoremstyle{remark}

\newcommand{\ds}{\displaystyle}

\newcommand{\R}{\mathbb{R}}
\newcommand{\N}{\mathbb{N}}

\newcommand{\de}{\partial}

\DeclareMathOperator{\spt}{spt}
\DeclareMathOperator{\diam}{diam}
{\left\{\begin{array}{@{}l@{}}}{\end{array}\right.}
\patchcmd{\abstract}{\scshape\abstractname}{\textbf{\abstractname}}{}{}
\makeatletter 
\def\@makefnmark{} 
\makeatother 

\title[Isoperimetric inequality for the first Steklov-Dirichlet Laplacian]{An Isoperimetric inequality for the first  Steklov-Dirichlet Laplacian eigenvalue of convex sets with a spherical hole}
\author[N. Gavitone, G. Paoli, G. Piscitelli,  R. Sannipoli]{
	Nunzia Gavitone, Gloria Paoli, Gianpaolo Piscitelli, Rossano Sannipoli}
\address{Dipartimento di Matematica e Applicazioni ``R. Caccioppoli'', Universit\`a degli studi di Napoli Federico II \\ Via Cintia, Complesso Universitario Monte S. Angelo, 80126 Napoli, Italy.}
\email{nunzia.gavitone@unina.it}
\address{Dipartimento di Matematica e Applicazioni ``R. Caccioppoli'', Universit\`a degli studi di Napoli Federico II \\ Via Cintia, Complesso Universitario Monte S. Angelo, 80126 Napoli, Italy.}
\email{gloria.paoli@unina.it}
\address{Dipartimento di Ingegneria Elettrica e dell'Informazione \lq\lq M. Scarano\rq\rq, Universit\`a degli Studi di Cassino e del Lazio Meridionale\\ Via G. Di Biasio n. 43, 03043 Cassino (FR), Italy.}
\email{gianpaolo.piscitelli@unicas.it}
\address{Dipartimento di Matematica e Applicazioni ``R. Caccioppoli'', Universit\`a degli studi di Napoli Federico II \\ Via Cintia, Complesso Universitario Monte S. Angelo, 80126 Napoli, Italy.}
\email{rossano.sannipoli@unina.it}

   
 

\begin{document}
\maketitle
\begin{abstract}
In this paper we  prove the existence of a maximum for the first Steklov-Dirichlet eigenvalue in the class of convex sets with a fixed spherical hole under volume constraint. More precisely,  if  $\Omega=\Omega_0 \setminus \overline{B}_{R_1}$, where     $B_{R_1}$ is the ball centered at the origin with radius $R_1>0$ and  $\Omega_0\subset\mathbb{R}^n$, $n\geq 2$, is an open bounded and convex set such that $B_{R_1}\Subset \Omega_0$, then the first Steklov-Dirichlet  eigenvalue $\sigma_1(\Omega)$ has a maximum when $R_1$ and the  measure of $\Omega$ are fixed. 
Moreover, if $\Omega_0$ is contained in a suitable ball,  we prove that the spherical shell is the maximum.\\ 

\noindent\textsc{MSC 2020:} 28A75, 35J25, 35P15. \\
\textsc{Keywords}:  Laplacian eigenvalue, Steklov-Dirichlet boundary conditions, isoperimetric inequality.
\end{abstract}

\section{Introduction and main results}
Let $\Omega_0\subset\mathbb{R}^n$, $n\geq 2$, be an open, bounded, connected set with Lipschitz boundary such that $B_{R_1}\Subset\Omega_0$, where $B_{R_1}$ is the open ball of radius $R_1>0$ centered at the origin  such that its closure is strictly contained in $\Omega_0$ and let us set $\Omega :=\Omega_0\setminus \overline{B_{R_1}}$.
The first Steklov-Dirichlet  eigenvalue  of  $\Omega$ is defined by 
\begin{equation}\label{eigSD_intro}
\sigma_1( \Omega)=\min_{\substack{v\in H^{1}_{\partial B_{R_1}}(\Omega)\\ v\not \equiv0} }     \dfrac{\ds\int _{\Omega}|\nabla v|^2\;dx}{\ds\int_{\partial\Omega_0}v^2\;d\mathcal{H}^{n-1}},
\end{equation}
where $H^{1}_{\partial B_{R_1}}(\Omega)$ is the set of Sobolev functions on $\Omega$ vanishing on $\partial B_{R_1}$ (see Section \ref{prelim} for the precise definition).   

Denoting by  $\nu$  the  outer unit normal to  $\partial\Omega_0$,   any minimizer of \eqref{eigSD_intro} satisfies the following problem  \begin{equation}\label{proSD_intro}
\begin{cases}
\Delta u=0 & \mbox{in}\ \Omega\vspace{0.2cm}\\
\dfrac{\de u}{\de \nu}=\sigma (\Omega)u&\mbox{on}\ \partial\Omega_0\vspace{0.2cm}\\ 
u=0&\mbox{on}\ \partial B_{R_1}, 
\end{cases}
\end{equation}
with   $\sigma(\Omega)=\sigma_1(\Omega)$.  For more details on $\sigma_1(\Omega)$ and the problem \eqref{proSD_intro} we refer the reader to Section $2.2$.

%



When $R_1=0$, \eqref{proSD_intro} is the classical  Steklov-Laplacian  eigenvalue problem. In this case, Weinstock in \cite{W1, W2} proved an isoperimetric inequality for the first non-trivial Steklov eigenvalue in two dimensions. More precisely, he showed that among all simply connected sets of the plane with prescribed perimeter, the disk maximizes the first non-trivial Steklov-Laplacian eigenvalue. In  \cite{BFNT} the authors proved that Weinstock inequality holds true in any dimension, provided they restrict to the class of convex sets with fixed perimeter. In \cite{Br}, it is proved that the ball is still a maximizer for the first non-trivial Steklov eigenvalue among all bounded open sets with Lipschitz boundary of $\mathbb{R}^n$, $n\geq 2$, with fixed volume. Stability and instability results are also studied (for instance we refer to \cite{BDR,BN,GLPT}).

When we consider a spherical hole with homoegeneous  Dirichlet boundary condition, that is $R_1>0$, the Steklov-Dirichlet eigenvalue problem  \eqref{proSD_intro} is substantially  different. 
The study of an eigenvalue problem on sets with a spherical hole is actually a topic of interest (see \cite{BKPS, K1, PPT, PW} and the references therein). In particular, problem \eqref{proSD_intro}  has been recently considered by several authors 
(see for instance \cite{D, Ft, HLS, HP,  GP, PPS, VS}). In \cite{PPS} it is proved that the first eigenvalue $\sigma_1(\Omega)$, as defined in \eqref{eigSD_intro}, is bounded from above when both the volume of $\Omega$ and the radius of the inner ball are fixed among the class of nearly spherical sets of $\mathbb{R}^n$. Moreover the authors prove that the spherical shell is a local maximizer.

The aim of this paper is twofold. First, we  prove the existence of a maximum for $\sigma_1(\Omega)$ in the class of sets $\Omega=\Omega_0 \setminus \overline{B}_{R_1}$, where $\Omega_0\subset\mathbb{R}^n$, $n\geq 2$, is an open bounded and convex set containing $B_{R_1}$, keeping  $R_1$ and the  measure of $\Omega$  fixed.  Actually, we prove  this existence result also when the hole is not spherical, but it is an open,  convex set $K\Subset\Omega_0$ with non-empty interior.  

Our second   aim is to find the shape of the maximum when the hole is spherical.
  We stress that, when $\Omega_0=B_{R_2}(x_0)$ is  a ball  centered at $x_0$ with radius $R_2>R_1$, in  \cite{Ft,VS} it is proved that $\sigma_1(\Omega)$ achieves the maximum  when $\Omega$ is the spherical shell, that is when the two balls are concentric.

Our   goal is to prove that this is also true for a suitable class of annular sets. 
More precisely our main result is the following.
\begin{thm}\label{main}
Let $R_1>0$, $\Omega_0\subset \mathbb R^n$ be an open, bounded and convex set, $n \ge 2$, such that $B_{R_1} \Subset  \Omega_0 \subseteq B_{\bar{R}}$, where $B_{\bar R}$ is the ball centered at the origin with radius $\bar R$ given by 
\begin{equation}\label{bound}
\bar R=\begin{cases}
R_1 e^{\sqrt 2} & \text{ if } n=2\\
 R_1 \bigg[\frac{(n-1)+ (n-2)\sqrt{2(n-1)}}{n-1}\bigg]^{\frac{1}{n-2}}  & \text{ if }  n\ge3.
\end{cases}
\end{equation}
Then,  denoting  by $\Omega=\Omega_0 \setminus\overline B_{R_1}$, the following inequality holds
\begin{equation}\label{iso_into}
\sigma_1(\Omega)\leq\sigma_1(A_{R_1,R_2}),
\end{equation}
where $A_{R_1,R_2}$ is the spherical shell of radii $R_1<R_2$ having the same volume as $\Omega$.
\end{thm}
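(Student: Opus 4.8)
The plan is to bound $\sigma_1(\Omega)$ from above by inserting into the Rayleigh quotient \eqref{eigSD_intro} the radial profile of the first Steklov--Dirichlet eigenfunction of the spherical shell. Let $g\colon[R_1,+\infty)\to[0,+\infty)$ be the harmonic radial function with $g(R_1)=0$, namely
\[
g(r)=\begin{cases}\log\dfrac{r}{R_1}&\text{if }n=2,\\[2mm] R_1^{\,2-n}-r^{\,2-n}&\text{if }n\ge3,\end{cases}
\]
so that $g>0$, $g'>0$, the map $r\mapsto g'(r)^2$ is decreasing on $(R_1,+\infty)$, and $x\mapsto g(|x|)$ restricted to $A_{R_1,R_2}$ realizes $\sigma_1(A_{R_1,R_2})$ in \eqref{eigSD_intro}. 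Since $0\in\Omega_0$, $R_1>0$ and $\Omega_0$ is bounded, $\phi(x):=g(|x|)$ belongs to $H^1_{\partial B_{R_1}}(\Omega)$, whence
\[
\sigma_1(\Omega)\le\frac{\int_\Omega g'(|x|)^2\,dx}{\int_{\partial\Omega_0}g(|x|)^2\,d\mathcal H^{n-1}},\qquad \sigma_1(A_{R_1,R_2})=\frac{\int_{A_{R_1,R_2}}g'(|x|)^2\,dx}{\int_{\partial B_{R_2}}g^2\,d\mathcal H^{n-1}}.
\]
It therefore suffices to show that the numerator does not increase and the denominator does not decrease when passing from $A_{R_1,R_2}$ to $\Omega$; the volume constraint $|\Omega|=|A_{R_1,R_2}|$, i.e. $|\Omega_0|=|B_{R_2}|$ with $R_1<R_2\le\bar R$, is used in both steps.

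\emph{Numerator.} Set $m(r):=\mathcal H^{n-1}(\partial B_r\cap\Omega_0)$, so that $0\le m(r)\le n\omega_n r^{n-1}=:M(r)$ and $\int_{R_1}^{+\infty}m(r)\,dr=\int_{R_1}^{R_2}M(r)\,dr$. In polar coordinates $\int_\Omega g'(|x|)^2\,dx=\int_{R_1}^{+\infty}g'(r)^2m(r)\,dr$ and $\int_{A_{R_1,R_2}}g'(|x|)^2\,dx=\int_{R_1}^{R_2}g'(r)^2M(r)\,dr$. Splitting the first integral at $R_2$ and using that $g'(r)^2$ is decreasing (hence $g'(r)^2\ge g'(R_2)^2$ on $[R_1,R_2]$ and $g'(r)^2\le g'(R_2)^2$ beyond $R_2$) together with the volume identity, one obtains $\int_\Omega g'(|x|)^2\,dx\le\int_{A_{R_1,R_2}}g'(|x|)^2\,dx$ by a rearrangement (``bathtub''-type) argument. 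This step uses neither convexity nor the bound $\bar R$.

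\emph{Denominator.} Let $\rho$ be the radial function of $\Omega_0$, which is well defined and Lipschitz since $\Omega_0$ is convex with $0$ in its interior, and let $\nu=\nu(\theta)$ denote the outer unit normal at $\rho(\theta)\theta$. Parametrizing $\partial\Omega_0$ by $\theta\mapsto\rho(\theta)\theta$ one has $d\mathcal H^{n-1}=\rho(\theta)^{n-1}\langle\nu,\theta\rangle^{-1}\,d\theta\ge\rho(\theta)^{n-1}\,d\theta$ on $\partial\Omega_0$, since $\langle\nu,\theta\rangle\in(0,1]$; therefore
\[
\int_{\partial\Omega_0}g(|x|)^2\,d\mathcal H^{n-1}\ge\int_{\mathbb S^{n-1}}\Phi\big(\rho(\theta)^n\big)\,d\theta,\qquad \Phi(t):=g\big(t^{1/n}\big)^2\,t^{(n-1)/n}.
\]
From $B_{R_1}\Subset\Omega_0\subseteq B_{\bar R}$ we get $R_1<\rho(\theta)\le\bar R$, while the volume constraint reads $\int_{\mathbb S^{n-1}}\rho(\theta)^n\,d\theta=n\omega_n R_2^n$. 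Thus, provided $\Phi$ is convex on $[R_1^{\,n},\bar R^{\,n}]$, Jensen's inequality yields $\int_{\partial\Omega_0}g^2\,d\mathcal H^{n-1}\ge n\omega_n\,\Phi(R_2^n)=g(R_2)^2\,n\omega_n R_2^{n-1}=\int_{\partial B_{R_2}}g^2\,d\mathcal H^{n-1}$, and combining the three estimates gives \eqref{iso_into}.

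The heart of the matter — and the only place where the threshold \eqref{bound} enters — is the convexity of $\Phi$ on $[R_1^{\,n},\bar R^{\,n}]$. Writing $t=r^n$ and $\psi(r):=g(r)^2r^{n-1}$, a direct computation shows that $\Phi''\ge0$ is equivalent to $r\mapsto\psi'(r)/r^{n-1}$ being nondecreasing on $[R_1,\bar R]$, which, after inserting the explicit $g$, reduces to a one-variable polynomial inequality: $\log(\bar R/R_1)\le\sqrt2$ when $n=2$, and $(\bar R/R_1)^{n-2}\le1+\sqrt{1+\tfrac{(n-3)(2n-3)}{n-1}}$ when $n\ge3$. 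One checks that the value of $\bar R$ in \eqref{bound} turns these into equalities, so the hypothesis $\Omega_0\subseteq B_{\bar R}$ is precisely what the method requires; I expect this verification (elementary but slightly delicate) to be the main technical point, everything else being the rearrangement estimate of the numerator and the star-shaped change of variables.
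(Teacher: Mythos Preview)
Your proof is correct and follows essentially the same route as the paper: you use the radial eigenfunction $g(|x|)$ of the spherical shell as a test function, bound the numerator via a Hardy--Littlewood/bathtub argument (the paper phrases this as the Hardy--Littlewood inequality for the radially decreasing weight $|\nabla w|^2$), and bound the denominator by parametrizing $\partial\Omega_0$ with the radial function, dropping the factor $\langle\nu,\theta\rangle^{-1}\ge1$, and applying Jensen's inequality to the function $\Phi(t)=g(t^{1/n})^2\,t^{(n-1)/n}$, whose convexity on $[R_1^n,\bar R^{\,n}]$ is exactly the content of the paper's preparatory lemma and the source of the threshold \eqref{bound}. Your reformulation of the convexity condition as the monotonicity of $\psi'(r)/r^{n-1}$ and the resulting one-variable inequalities (with $\log(\bar R/R_1)\le\sqrt2$ for $n=2$ and $(\bar R/R_1)^{n-2}\le 1+\tfrac{(n-2)\sqrt{2(n-1)}}{n-1}$ for $n\ge3$) is equivalent to the paper's computation.
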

\bigskip

We observe that  the convexity assumption is not just technical but it is  natural  when dealing with Steklov-Dirichlet  eigenvalues (see \cite{FS}). 



The outline of the paper is the  following. In the next  Section we set  the notation
and collect  some basic results about the Steklov-Dirichlet eigenvalue problem that will
be needed in the sequel. In Section $3$, firstly we prove some estimates for  $\sigma_1(\Omega)$ in terms of suitable geometrical quantities related to $\Omega$ and then we state and prove the existence result.  In Section $4$, by
using a suitable weighted  isoperimetric inequality, we prove that the spherical shell is a maximum for $\sigma_1(\Omega)$ when both the volume of $\Omega$ and the radius of the inner ball are fixed. Eventually, in Section $5$, we discuss on a maximization problem for $\sigma_1(\Omega)$ under  perimeter constraint.

\section{Preliminary results}\label{prelim}

\subsection{Notations and basic facts}
Throughout this paper, we denote by $B_{R}(x_0)$ the ball centered at  $x_0\in\mathbb{R}^n$ with radius $R>0$, by $B_R$ the ball centered at the origin with radius $R$ and by $B$, $\mathbb{S}^{n-1}$ and $\omega_n$ respectively the unit ball of $\mathbb R^{n}$ centered at the origin, its boundary and its volume. Let $R_{1},R_{2}$ be such that $0<R_{1}<R_{2}$, the spherical shell  will be denoted as follows:
\begin{equation*}
A_{R_{1},R_{2}}=\{x \in \mathbb R^{n} \colon R_1<|x|<R_2\}.
\end{equation*}
Moreover, the $(n-1)$-dimensional Hausdorff measure in $\mathbb{R}^n$ will be denoted by $\mathcal H^{n-1}$.  The Euclidean scalar product in $\mathbb{R}^n$ is denoted by $(\cdot,\cdot)$.

Let $D\subseteq\mathbb{R}^n$ be an open bounded set and let $E\subseteq\R^{n}$ be a measurable set.  For the sake of completeness, we recall here the definition of the perimeter of $E$ in $D$ (see for instance \cite{Am, M}), that is
\begin{equation*}
P(E;D)=\sup\left\{  \int_E {\rm div} \varphi\:dx :\;\varphi\in C^{\infty}_c(D;\mathbb{R}^n),\;||\varphi||_{\infty}\leq 1 \right\}.
\end{equation*}
The perimeter of $E$ in $\mathbb{R}^n$ will be denoted by $P(E)$ and, if $P(E)<\infty$, we say that $E$ is a set of finite perimeter. Moreover, if $E$ has Lipschitz boundary, it holds
\[
P(E)=\mathcal H^{n-1}(\partial E).
\]
The Lebesgue measure of a measurable set $E \subset \mathbb R^{n}$ will be denoted by $V(E)$.
Moreover, we define the inradius of $E\subset \mathbb R^{n}$ as 
\begin{equation}\label{inradius_def}
  \rho( E)=\sup_{x\in E}\; \inf_{y\in\partial E} |x-y|,
\end{equation} 
while the diameter of $E$ is
\[
\diam (E)=\sup_{x,y \in E} |x-y|.
\]
If $E$ is an open, bounded and convex set of $\R^n$ with non-empty interior, we have (see for instance \cite{DPBG,Sc})
\begin{equation}\label{Cheeger}
	\rho(E)\leq \dfrac{nV(E)}{P(E)},
\end{equation}
and the following  (see \cite{EFT,GWW,Sc}):
\begin{equation}\label{vol_diam}
    P(E)^{n-1}>\omega_{n-1} n^{n-2} \diam(E) V(E)^{n-2}.
\end{equation}
Finally, we recall the definition of Hausdorff distance between two non-empty compact sets $E,F \subset \R^n$, that is (see for instance \cite{Sc}) 
\[
\delta_{\mathcal H}(E,F)=\inf \{\varepsilon >0\ :\ E\subset F+B_\varepsilon ,\ F\subset E + B_\varepsilon \}.
\]
Note that, if $E,F$ are both convex sets, then $\delta_{\mathcal H}(E,F)=\delta_{\mathcal H}(\partial E,\partial F)$. 	
	
Let $\{E_k\}_{k\in\N}$ be a sequence of non-empty compact subsets of $\R^n$, we say that $E_k$  converges to $E$ in the Hausdorff sense and we denote
\[
E_k\stackrel{\mathcal H}{\longrightarrow} E
\]
if and only if  $\delta_{\mathcal H}(E_k,E)\to 0$ as $k\to \infty$.
Moreover,  we say that $\{E_k\}_{k\in\N}$ converges in measure to $E$, and we write $E_k\rightarrow E$, if $\chi_{E_k}\rightarrow\chi_E$ in $L^1(\mathbb R^{n})$, where $\chi_E$ and $\chi_{E_k}$ are the characteristic functions of $E$ and $E_k$ respectively.
In what follows we recall some properties of the convex bodies, i.e. compact convex sets without empty interior. We refer to  \cite{Sc} for further properties and the details.

 Let $K \subset \R^n$ be a bounded convex body. The support function $h_K$ of $K$ is defined as
follows
\[
h_K \colon \mathbb S^{n-1}\to  \R, \quad  h_K(x) = \sup_{y \in K}(x,y).\]
If the origin belongs to $K$ then $h_K$ is non-negative and $h_K(x) \leq \text{diam}(K)$ for every $ x \in \mathbb S^{n-1}$. 	
 Let $K \subset \R^n$ be a bounded convex body such that the origin is an interior point of $K$. The radial function of $K$ is defined as follows
 \begin{equation} 
 \label{ro}
 \rho_K(x)=\sup\{\lambda\ge 0 \colon \lambda x \in K\}, \quad x\in \mathbb S^{n-1},
 \end{equation}
and it is a Lipschitz function. The  radial map  is 
\begin{equation}
\label{r}
r_K\colon \mathbb S^{n-1} \to \partial K, \quad r_K(x)=x\rho_K (x) .
\end{equation}
Then    
\begin{equation}
\label{rad}
\partial K = \{ x\,\rho_K(x),  x \in \mathbb S^{n-1}\}.
\end{equation}
Let us define the minimum and the maximum distance of $\partial K$ from the origin as follows
\begin{equation}
\label{re}
R_m=\min_{\mathbb S^{n-1}}\rho_K(x), \qquad\quad R_M=\max_{\mathbb S^{n-1}}\rho_K(x).
\end{equation}
Moreover if $f\colon \partial K \to \R$ is $\mathcal{H}^{n-1}-$integrable  the following formula for the change of variable given by the radial map holds:
\begin{equation}
\label{cambio}
\int_{\partial K} f \, d \mathcal H^{n-1}= \int_{\mathbb S^{n-1}}f(r_K(x)) \displaystyle \frac{\rho_K(x)}{h_K(\nu_K(r_K(x)))}\, d \mathcal H^{n-1},
\end{equation}
where $\nu_K(r_K(x))$ is the outer unit normal to $\partial K$ at the point $r_K(x)=x\rho_K(x)$. We have  (see for example \cite{Sc}) 
\[
\nu_K(r_K(x))=\displaystyle \frac{x\rho_K(x)-\nabla_{\tau}\rho_K(x)}{\sqrt{(\rho_K(x))^2+|\nabla_{\tau}\rho_K(x)|^2}},
\]
where by  $\nabla_{\tau} \rho_K $ we denote the 
 the component of  $\nabla \rho_K$ tangential to $\mathbb S^{n-1}$.
So, we observe that \eqref{cambio} is equivalent to 
\[
\int_{\partial K} f \, d \mathcal H^{n-1}=\int_{\mathbb S^{n-1}}f(r_K(x))(\rho_K(x))^{n-1}\sqrt{1+\left(\frac{|\nabla_\tau \rho_K(x)|}{\rho_K(x)}\right)^2}\,d\mathcal H^{n-1}.
\]
The following result holds (see for instance \cite{chen}, \cite{wang}, \cite{Sc}).
\begin{lem}
\label{conv}
Let $K_n$ and $K$ be bounded convex bodies containing the origin for any $n \in \mathbb N$ and such that $K_n \to K$  in the Hausdorff sense. For any  $n\in\{0,1,2,...\}$, let $h_{K_n}, \rho_{K_n}$ be the support function and the radial function  $K_n$, respectively. Then the following statements hold
\begin{itemize}
\item[(i)] Let $h_K$ be the support function of $K$ then
\[
\sup_{x\in\mathbb S^{n-1}}|h_{K_n}(x)-h_K(x)| \to 0.
\]
\item[(ii)] Let $\rho_K$ the radial function of $K$ then
\[
\sup_{x\in\mathbb S^{n-1}}|\rho_{K_n}(x)-\rho_K(x)| \to 0.
\]
\item[(iii)]  Let $x\in \partial K$ and $x_n\in \partial K_n$, $n\in\mathbb{N}$, points where $\nu_K(x)$ and $\nu_{K_n}(x_n)$ are well defined and such that
\begin{equation*}
\lim_{n\to+\infty} x_n=x.
\end{equation*}
Then 
\begin{equation*}
\lim_{n\to +\infty}\nu_{K_n}(x_n)=\nu_K(x).
\end{equation*}
\end{itemize}
\end{lem}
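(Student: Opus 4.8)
The plan is to dispatch the three items of Lemma~\ref{conv} in increasing order of difficulty. Item (i) is a classical reformulation: since the support function of a Minkowski sum obeys $h_{L+B_\varepsilon}=h_L+\varepsilon$ on $\mathbb S^{n-1}$, and since $L\subseteq M+B_\varepsilon$ holds exactly when $h_L\le h_M+\varepsilon$ pointwise on $\mathbb S^{n-1}$, the very definition of $\delta_{\mathcal H}$ rewrites as
\[
\delta_{\mathcal H}(L,M)=\sup_{\theta\in\mathbb S^{n-1}}|h_L(\theta)-h_M(\theta)|,
\]
and applying this with $L=K_n$, $M=K$ gives (i) at once. The workhorse for (ii) and (iii) is the following elementary stability fact, which I would isolate first (using (i) and that, as needed for the radial functions, the origin is an interior point of $K$, say $B_{2r}\subseteq K$ for some $r>0$): \emph{if $B_\delta(p)\subseteq K$, then $p\in K_n$ as soon as $\delta_{\mathcal H}(K_n,K)<\delta$.} This is proved by separation: if $p\notin K_n$, a separating hyperplane yields a unit vector $\xi$ with $h_{K_n}(\xi)\le(\xi,p)$, hence $h_K(\xi)\le h_{K_n}(\xi)+\delta_{\mathcal H}(K_n,K)<(\xi,p)+\delta$, contradicting $B_\delta(p)\subseteq K$, which forces $h_K(\xi)\ge(\xi,p)+\delta$. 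Consequences: $B_r\subseteq K_n$ for $n$ large, so the $\rho_{K_n}$ are eventually defined and bounded below by $r$; and, $\{K_n\}$ being uniformly bounded by Hausdorff convergence, they are bounded above by a common constant.

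For (ii), I would combine the previous fact with the inclusion
\[
E+B_\varepsilon\subseteq\Big(1+\frac{\varepsilon}{s}\Big)E\qquad\text{whenever $E$ is convex and }B_s\subseteq E,
\]
obtained by writing $a+b$ (with $a\in E$, $|b|\le\varepsilon$) as the dilation by $1+\varepsilon/s$ of a convex combination of $a\in E$ and $sb/\varepsilon\in B_s\subseteq E$. Setting $\varepsilon=\delta_{\mathcal H}(K_n,K)$ and applying this with $E=K$ (so $s=2r$) and then with $E=K_n$ (so $s=r$), and evaluating radial functions along each direction $x\in\mathbb S^{n-1}$, one obtains
\[
\rho_{K_n}(x)\le\Big(1+\frac{\varepsilon}{2r}\Big)\rho_K(x),\qquad \rho_K(x)\le\Big(1+\frac{\varepsilon}{r}\Big)\rho_{K_n}(x).
\]
Since $\rho_K$ and the $\rho_{K_n}$ are uniformly bounded above, these two inequalities give $\sup_{x\in\mathbb S^{n-1}}|\rho_{K_n}(x)-\rho_K(x)|\le C\,\delta_{\mathcal H}(K_n,K)$ with $C$ depending only on $r$ and $\sup_{\mathbb S^{n-1}}\rho_K$, and letting $n\to\infty$ proves (ii).

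For (iii), first note that $x_n\to x$ with $x_n\in\partial K_n$ forces $x\in\partial K$: one has $\dist(x_n,K)\le\delta_{\mathcal H}(K_n,K)\to 0$, hence $x\in K$, whereas $x\in\mathrm{int}\,K$ would, via the stability fact applied to a ball around $x_n$ sitting with positive margin inside $K$, place a whole ball around $x_n$ inside $K_n$ for $n$ large, contradicting $x_n\in\partial K_n$. Then I argue by compactness: any subsequence of $\{\nu_{K_n}(x_n)\}\subseteq\mathbb S^{n-1}$ admits a further subsequence with $\nu_{K_{n_k}}(x_{n_k})\to\nu$. For every $y$ in the interior of $K$ we have $y\in K_{n_k}$ for $k$ large (stability fact again), so the supporting inequality $(\nu_{K_{n_k}}(x_{n_k}),\,y-x_{n_k})\le 0$ holds; passing to the limit gives $(\nu,y-x)\le 0$, and by density this persists for all $y\in K$. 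Thus $\nu$ is an outer unit normal to $K$ at $x$; since $x$ is, by hypothesis, a point of uniqueness of the normal, $\nu=\nu_K(x)$. As every subsequence has a sub-subsequence with the same limit $\nu_K(x)$, the full sequence converges, which is (iii).

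The main obstacle is item (iii): unlike (i) and (ii), it admits no quantitative estimate and instead rests on the compactness argument combined with the passage to the limit in the supporting-hyperplane inequalities; the delicate point there — also what makes the statement meaningful — is precisely the stability fact that interior points of $K$ are eventually swallowed by $K_n$, itself a consequence of the separation argument and of (i).
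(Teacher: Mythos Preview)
Your argument for all three items is correct and cleanly organized. The reduction of (i) to the identity $\delta_{\mathcal H}(L,M)=\sup_{\theta}|h_L(\theta)-h_M(\theta)|$ is standard; the dilation trick $E+B_\varepsilon\subseteq(1+\varepsilon/s)E$ for (ii) is a nice way to get a \emph{quantitative} Lipschitz bound $\sup_x|\rho_{K_n}(x)-\rho_K(x)|\le C\,\delta_{\mathcal H}(K_n,K)$; and the subsequence-plus-supporting-hyperplane argument for (iii), combined with the hypothesis that $\nu_K(x)$ is uniquely determined, is exactly the right mechanism.

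There is, however, nothing to compare against: the paper does \emph{not} supply its own proof of this lemma. It introduces the statement with ``The following result holds (see for instance \cite{chen}, \cite{wang}, \cite{Sc})'' and treats it as a quoted fact from the literature on convex bodies. So your proposal is not an alternative route to the paper's proof but rather a self-contained justification where the paper opted for a citation. In that sense your contribution is strictly additive: it makes the manuscript more self-contained, and in the case of (ii) it even upgrades the qualitative convergence to an explicit Lipschitz estimate in terms of the inner radius of $K$.
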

By \eqref{cambio}, Lemma \ref{conv} and the Lebesgue's convergence Theorem  we immediately get
\begin{thm}
\label{fr}
Let $K_n$ and $K$ be bounded convex bodies containing the origin for any $n \in \mathbb N$ and such that $K_n \to K$  in the Hausdorff sense. Let $$f_n \colon \partial K_n \to \R, \quad  f \colon 	\partial K \to \R$$ be $\mathcal H^{n-1}$ measurable functions such that 
\begin{itemize}
\item [(i)] there exists $C>0$ such that\[\|f\|_{L^{\infty}(\partial K)}\le C, \quad \|f_n\|_{L^{\infty}(\partial K_n)}\le C, \,\, \forall n \in +\mathbb N,\] 
\item[(ii)] if $x_n\in \partial K_n$ is such that $ \lim_{n\to\infty} x_n=x \in \partial K$, $f_n$ is defined in $x_n$ and
\[
\lim_{n \to+ \infty} f_n(x_n)= f(x).
\]
\end{itemize}
Then
\[
\lim_{n \to +\infty} \displaystyle \int_{\partial K_n} f_n(x_n) \,d \mathcal H^{n-1}=\displaystyle \int_{\partial K} f(x) \,d \mathcal H^{n-1}.
\]
\end{thm}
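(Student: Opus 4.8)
The plan is to transport both sides onto the fixed unit sphere $\mathbb S^{n-1}$ by means of the change of variables formula \eqref{cambio}, and then to pass to the limit under the integral sign by dominated convergence. First I would normalise the situation: since the origin is an interior point of $K$ and $K_n\to K$ in the Hausdorff sense, it is standard (and follows e.g.\ from the uniform convergence $h_{K_n}\to h_K$ of Lemma \ref{conv}(i), together with the fact that $0\in\operatorname{int}(K)$ iff $h_K>0$ on $\mathbb S^{n-1}$) that $0\in\operatorname{int}(K_n)$ for all $n$ large and that $B_\delta\subseteq K_n\subseteq B_M$ for suitable $0<\delta<M$ independent of $n$; discarding finitely many indices I may assume this for every $n$, so that the radial functions are well defined with $\delta\le\rho_{K_n}\le M$ and $h_{K_n}\ge\delta$ on $\mathbb S^{n-1}$. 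Let $g_n,g\colon\mathbb S^{n-1}\to\R$ denote the integrands appearing on the right-hand side of \eqref{cambio} when the formula is applied to the pair $(K_n,f_n)$ and to the pair $(K,f)$, respectively; thus $g_n$ is the product of $f_n\circ r_{K_n}$ with the radial Jacobian built out of $\rho_{K_n}$, $h_{K_n}$ and $\nu_{K_n}\circ r_{K_n}$, and similarly for $g$. By \eqref{cambio} it is then enough to prove that $g_n\to g$ $\mathcal H^{n-1}$-a.e.\ on $\mathbb S^{n-1}$ and that the $g_n$ are bounded by a fixed $\mathcal H^{n-1}$-integrable function.

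The domination is immediate: the bounds $\delta\le\rho_{K_n}\le M$, $h_{K_n}\ge\delta$ on $\mathbb S^{n-1}$ and $\|f_n\|_{L^{\infty}(\partial K_n)}\le C$ force $|g_n|\le C'$ on $\mathbb S^{n-1}$ for a constant $C'$ independent of $n$, which is integrable since $\mathcal H^{n-1}(\mathbb S^{n-1})<\infty$. For the pointwise convergence I would first dispose of the measure-theoretic bookkeeping: the inverse radial maps $y\mapsto y/|y|$ are Lipschitz on $\partial K_n$ uniformly in $n$ (all boundaries lying in the fixed annulus $B_M\setminus B_\delta$), hence they send $\mathcal H^{n-1}$-null subsets of $\partial K_n$ to $\mathcal H^{n-1}$-null subsets of $\mathbb S^{n-1}$; since the boundary of a convex body is differentiable $\mathcal H^{n-1}$-a.e.\ and each $f_n$ is defined $\mathcal H^{n-1}$-a.e.\ on $\partial K_n$, the set of $x\in\mathbb S^{n-1}$ for which $\nu_K$ is undefined at $r_K(x)$, or $\nu_{K_n}$ is undefined at $r_{K_n}(x)$ for some $n$, or $f_n$ is undefined at $r_{K_n}(x)$ for some $n$, is a countable union of null sets, hence null. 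Fix $x$ outside this set. Then Lemma \ref{conv}(ii) gives $\rho_{K_n}(x)\to\rho_K(x)$, hence $r_{K_n}(x)=x\rho_{K_n}(x)\to x\rho_K(x)=r_K(x)$; Lemma \ref{conv}(iii) then gives $\nu_{K_n}(r_{K_n}(x))\to\nu_K(r_K(x))$, and combined with the uniform convergence $h_{K_n}\to h_K$ of Lemma \ref{conv}(i) and the continuity of $h_K$ this yields $h_{K_n}(\nu_{K_n}(r_{K_n}(x)))\to h_K(\nu_K(r_K(x)))\ge\delta$; finally hypothesis (ii), applied to the sequence $r_{K_n}(x)\to r_K(x)$, gives $f_n(r_{K_n}(x))\to f(r_K(x))$. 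Multiplying these limits yields $g_n(x)\to g(x)$, and dominated convergence gives $\int_{\mathbb S^{n-1}}g_n\,d\mathcal H^{n-1}\to\int_{\mathbb S^{n-1}}g\,d\mathcal H^{n-1}$, which by \eqref{cambio} is exactly the assertion.

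The analytic content here is light — essentially ``compose the three convergences of Lemma \ref{conv} and invoke Lebesgue''. The one point that genuinely requires care, and which I expect to be the main obstacle, is the bookkeeping in the second paragraph: one must make sure that the exceptional sets on which the relevant outer normals, or the functions $f_n$, are undefined can be taken uniformly in $n$ (a countable union), so that Lemma \ref{conv}(iii) really is applicable along the sequence $r_{K_n}(x)\to r_K(x)$ for $\mathcal H^{n-1}$-a.e.\ $x\in\mathbb S^{n-1}$, and that the radial reparametrisations do not spoil the a.e.\ statements — which is why the uniform Lipschitz bound on the maps $y\mapsto y/|y|$, provided by the normalisation $B_\delta\subseteq K_n\subseteq B_M$, is used.
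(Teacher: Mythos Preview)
Your argument is correct and follows exactly the route the paper indicates (change of variables via \eqref{cambio}, pointwise convergence from the three parts of Lemma \ref{conv}, then Lebesgue's dominated convergence theorem); the paper states the result as an immediate consequence of these ingredients without spelling out the details you supply. In particular, your careful handling of the normalisation $B_\delta\subseteq K_n\subseteq B_M$ and of the countable union of exceptional null sets is more than the paper provides, but it is precisely what one needs to make the one-line justification rigorous.
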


\subsection{The Steklov-Dirichlet eigenvalue problem}
Let $R_1>0$ and $\Omega_0\subset\R^n$ be an open bounded connected set with Lipschitz boundary and such that  $B_{R_1}\Subset\Omega_0$, that means $\overline B_{R_1}\subset\Omega_0$. 
Let us  set $\Omega :=\Omega_0\setminus \overline B_{R_1}$. 

We denote the set of Sobolev functions on $\Omega$ vanishing on $\partial B_{R_1}$ by $H^1_{\partial B_{R_1}}(\Omega)$, that is (see \cite{ET}) the closure in $H^1(\Omega)$ of the following set

\begin{equation*}
C^\infty_{\partial B_{R_1}} (\Omega):=\{ u_{|\Omega}  \ | \ u \in C_0^\infty (\R^n),\ \spt (u)\cap \partial B_{R_1}=\emptyset \}.  
\end{equation*}
Let us consider the following Steklov-Dirichlet eigenvalue problem in $\Omega$ 
\begin{equation}\label{problem}
\begin{cases}
\Delta u=0 & \mbox{in}\ \Omega\vspace{0.2cm}\\
\dfrac{\de u}{\de \nu}=\sigma (\Omega)u&\mbox{on}\ \partial\Omega_0\vspace{0.2cm}\\ 
u=0&\mbox{on}\ \partial B_{R_1}, 
\end{cases}
\end{equation}
where  $\nu $ is the  outer unit normal to  $\partial\Omega_0$.
The spectrum of \eqref{problem} is  discrete and the eigenvalue can be ordered as follows
\[
0<\sigma_{1}(\Omega)\leq\sigma_2(\Omega)\leq...\ .
\]
In  \cite{PPS} the authors study  the first eigenvalue of \eqref{problem},  which has the following variational characterization
\begin{equation}\label{eig}
\sigma_1( \Omega)=\min_{\substack{v\in H^{1}_{\partial B_{R_1}}(\Omega)\\ v\not \equiv0} }     
\dfrac{\ds\int _{\Omega}|\nabla v|^2\;dx}{\ds\int_{\partial\Omega_0}v^2\;d\mathcal{H}^{n-1}}, 
\end{equation}
and they also prove the following result.
\begin{prop}
Let $R_1>0$ and $\Omega_0\subset\R^n$ be an open bounded connected set with Lipschitz boundary and such that  $B_{R_1}\Subset\Omega_0$ and let $\Omega :=\Omega_0\setminus \overline B_{R_1}$. There exists a function $u\in H^1_{\partial B_{R_1}}(\Omega)$ which achieves the minimum  in \eqref{eig} and it is a weak solution to the problem \eqref{problem}. Moreover $\sigma_1(\Omega)$ is simple and the first eigenfunctions have constant sign in $\Omega$. 
\end{prop}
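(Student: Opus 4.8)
The plan is to argue by the direct method of the calculus of variations, then perform the standard first-variation computation, and finally deduce the qualitative properties from a maximum-principle argument. Throughout, $R(v)$ denotes the Rayleigh quotient in \eqref{eig}.

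First I would establish existence of a minimizer. Since $R$ is $0$-homogeneous, it is equivalent to minimize the Dirichlet energy $\int_\Omega |Dv|^2\,dx$ over the constraint set $\{v\in H^1_{\partial B_{R_1}}(\Omega):\ \int_{\partial\Omega_0}v^2\,d\mathcal H^{n-1}=1\}$. Let $\{u_k\}$ be a minimizing sequence. Because every $u_k$ vanishes on $\partial B_{R_1}$ and $\Omega$ is bounded and connected, a Poincar\'e--Friedrichs inequality gives $\|u_k\|_{L^2(\Omega)}\le C\|Du_k\|_{L^2(\Omega)}$, so that $\{u_k\}$ is bounded in $H^1(\Omega)$. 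Up to a subsequence, $u_k\rightharpoonup u$ weakly in $H^1(\Omega)$; since $H^1_{\partial B_{R_1}}(\Omega)$ is a closed linear subspace, it is weakly closed, and hence $u\in H^1_{\partial B_{R_1}}(\Omega)$. The Dirichlet energy is weakly lower semicontinuous, whence $\int_\Omega|Du|^2\le\liminf_k\int_\Omega|Du_k|^2=\sigma_1(\Omega)$. As $\partial\Omega_0$ is Lipschitz, the trace operator $H^1(\Omega)\to L^2(\partial\Omega_0)$ is compact, so $u_k\to u$ strongly in $L^2(\partial\Omega_0)$ and the normalization passes to the limit, giving $\int_{\partial\Omega_0}u^2\,d\mathcal H^{n-1}=1$; in particular $u\not\equiv0$ and $u$ attains the infimum in \eqref{eig}.

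Next I would derive the weak formulation. For any test function $\varphi\in H^1_{\partial B_{R_1}}(\Omega)$ the map $t\mapsto R(u+t\varphi)$ is differentiable and attains a minimum at $t=0$; setting its derivative to zero yields $\int_\Omega Du\cdot D\varphi\,dx=\sigma_1(\Omega)\int_{\partial\Omega_0}u\varphi\,d\mathcal H^{n-1}$ for all such $\varphi$. This is precisely the weak form of \eqref{problem}: it encodes harmonicity in $\Omega$, the Steklov condition $\partial u/\partial\nu=\sigma_1(\Omega)u$ on $\partial\Omega_0$, and, through the choice of the function space, the homogeneous Dirichlet condition on $\partial B_{R_1}$.

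It remains to prove the constant sign and the simplicity. Since $|D|u||=|Du|$ almost everywhere and the trace of $|u|$ has the same $L^2(\partial\Omega_0)$ norm as that of $u$, the function $|u|$ is again a minimizer, hence a weak solution, hence harmonic and nonnegative in $\Omega$. If $|u|$ vanished at an interior point of $\Omega$ it would attain its minimum there, so by the strong maximum principle $|u|\equiv0$, contradicting $u\not\equiv0$; therefore $|u|>0$ in $\Omega$ and $u$ has constant sign (its vanishing on $\partial B_{R_1}$ being only a boundary minimum). For simplicity, suppose $u_1,u_2$ were linearly independent first eigenfunctions; by the previous step both may be taken strictly positive in $\Omega$. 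Fixing $x_\ast\in\Omega$, the function $w=u_2(x_\ast)\,u_1-u_1(x_\ast)\,u_2$ is again a first eigenfunction, is not identically zero by linear independence, yet vanishes at the interior point $x_\ast$; the constant-sign argument then forces $w\equiv0$, a contradiction. Hence the first eigenspace is one-dimensional. I expect the two delicate ingredients to be the compactness of the trace embedding on the Lipschitz portion $\partial\Omega_0$, which is exactly what keeps the limit nontrivial, and the interior elliptic regularity needed to evaluate eigenfunctions pointwise and to invoke the strong maximum principle; both are classical for harmonic functions on Lipschitz domains, so the argument is routine once they are cited.
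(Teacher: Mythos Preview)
Your argument is correct and follows the standard route: direct method with compactness of the trace for existence, first variation for the Euler--Lagrange equation, and the $|u|$-trick combined with the strong maximum principle for sign and simplicity. Note, however, that the paper does not actually supply its own proof of this proposition; it is quoted as a result of \cite{PPS} and stated without argument, so there is no in-paper proof to compare against. Your write-up is essentially what one expects such a proof to look like, and the two points you flag as delicate (compactness of the trace on the Lipschitz outer boundary and interior regularity of harmonic functions) are indeed the only places requiring external input.
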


In particular they prove the following upper bound
\begin{equation}\label{upper_bound}
\sigma_1(\Omega)\leq   C(n,R_1,V(\Omega))V^{\frac1n}(\Omega),
\end{equation}
where 
\begin{equation*}
C(n, R_1, V(\Omega))=\dfrac{2}{n\omega_n^{\frac 1n}\left(\left(   \dfrac{V(\Omega)}{2\omega_n}+R_1^n\right)^{\frac1n}-R_1\right)^2}.
\end{equation*}
As a consequence, the first Steklov-Dirichlet eigenvalue remains bounded from above, when  the volume of $\Omega$ and $R_1$ are fixed.

Obviously $\sigma_1(\Omega)$ is  bounded also when we fix the perimeter of $\Omega$, that is equivalent to fix the perimeter of $\Omega_0$, instead of the volume. Indeed by \eqref{upper_bound} and the isoperimetric inequality, we can deduce the following upper bound
\begin{equation}
\label{Upperp}
\sigma_1(\Omega)\leq \dfrac{2V^{\frac1n}(\Omega)}{n\omega_n^{\frac 1n}\left(\left(   \dfrac{V(\Omega)}{2\omega_n}+R_1^n\right)^{\frac1n}-R_1\right)^2}\leq C(n)\dfrac{P^{\frac{1}{n-1}}(\Omega_0)}{R_1^2},
\end{equation}
where $C(n)$ is a positive constants that depends only on the dimension $n$.


In particular the following scaling property for $\sigma_1(\Omega)$ holds: 
\begin{equation*}
    \sigma_1(t\Omega)=\frac 1 t \sigma_1(\Omega), \quad\forall t>0.
\end{equation*}

Now we recall some known results about the first Steklov-Dirichlet eigenvalue when $\Omega$ is  a spherical shell. In this case,  in  \cite{VS} the authors find the explicit expression of the first eigenfunction.
\begin{prop} Let  $A_{R_1,R_2}$ be the spherical shell with radii $R_2> R_1>0$. The first eigenfunction associated to $\sigma_1(A_{R_1,R_2})$, is a  radially symmetric, positive, strictly increasing function and it is given by	
\begin{equation}
\label{radial}
w(r)=\begin{cases}
\ln r-\ln R_1
& {\rm for}\;\; n=2\vspace{0.1cm}\\
\left( \dfrac{1}{R_1^{n-2}}-\dfrac{1}{r^{n-2}}\right)& {\rm for}\;\; n\geq 3\vspace{0.1cm},
\end{cases}
\end{equation}
with $r=|x|$. The corresponding first Steklov-Dirichlet eigenvalue can be computed and it is the following
\begin{equation}\label{eig_an}
\sigma_1(A_{R_1,R_2})=
\begin{cases}
\frac{1}{R_2\log\left(\frac{R_2}{R_1}\right)}& {\rm for}\;\; n=2\vspace{0.1cm}\\
\frac{n-2}{R_2\left[\left(\frac{R_2}{R_1}\right)^{n-2}-1\right]}& {\rm for}\;\; n\geq 3\vspace{0.1cm}.\\ 
\end{cases}
\end{equation}
\end{prop}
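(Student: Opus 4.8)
The plan is to exploit the full rotational symmetry of the spherical shell together with the simplicity of $\sigma_1$, which reduces the problem to an elementary ordinary differential equation whose solution can be written down explicitly.

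First I would argue that a first eigenfunction can be chosen radial. Since $A_{R_1,R_2}$ is invariant under every orthogonal transformation $O\in O(n)$, and since the Laplacian, the Steklov condition on $\partial B_{R_2}$ and the Dirichlet condition on $\partial B_{R_1}$ are all preserved by such transformations, for any first eigenfunction $u$ the composition $u\circ O$ is again a first eigenfunction with the same eigenvalue. By the simplicity of $\sigma_1$ recalled in the previous Proposition, $u\circ O=c(O)\,u$ for some scalar $c(O)$; comparing the $L^2(\partial B_{R_2})$ norms forces $|c(O)|=1$, and fixing the sign of the (constant-sign) eigenfunction gives $c(O)=1$ for every $O$. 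Hence $u$ is invariant under all rotations, i.e.\ $u(x)=w(r)$ with $r=|x|$.

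Next I would solve the reduced ODE. A radial harmonic function on the annulus satisfies $w''+\frac{n-1}{r}\,w'=0$, whose general solution is $w(r)=a+b\log r$ for $n=2$ and $w(r)=a+b\,r^{2-n}$ for $n\ge 3$. The Dirichlet condition $w(R_1)=0$ determines $a$ in terms of $b$; normalizing $b$ (to $1$ when $n=2$ and to $-1$ when $n\ge 3$) yields exactly the functions in \eqref{radial}, which are manifestly positive and strictly increasing on $(R_1,R_2)$, in agreement with the constant-sign property of the first eigenfunction. Imposing finally the Steklov condition at $r=R_2$, where the outer unit normal is $\nu=x/|x|$ so that $\de u/\de\nu=w'(R_2)$, the relation $w'(R_2)=\sigma\,w(R_2)$ becomes a single scalar equation for $\sigma$; substituting the explicit $w$ and simplifying produces the values in \eqref{eig_an}. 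Since the resulting $w$ has constant sign, it is necessarily a \emph{first} eigenfunction, because any higher eigenfunction, being $L^2$-orthogonal on $\partial B_{R_2}$ to the positive one, must change sign; hence the computed $\sigma$ is indeed $\sigma_1(A_{R_1,R_2})$.

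The main obstacle is the symmetry reduction: one must justify rigorously that the first eigenfunction is radial. The cleanest route is the simplicity argument above; alternatively one can bypass it by directly verifying that the explicit radial $w$ solves \eqref{problem} with the stated $\sigma$ and then invoking the constant-sign characterization of $\sigma_1$ to conclude that this eigenvalue is the smallest. Once radiality is secured, all that remains are the routine ODE computations described above.
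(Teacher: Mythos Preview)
The paper does not give its own proof of this Proposition: it is quoted from \cite{SV} as background, so there is no in-paper argument to compare against. Your proposal is correct and is essentially the standard derivation one finds in the literature. The symmetry reduction via simplicity of $\sigma_1$ (already recorded in the preceding Proposition of the paper) is the natural way to force radiality, and your alternative route---exhibit the explicit radial harmonic function satisfying the boundary conditions and then invoke the constant-sign characterization of the first eigenfunction---is equally valid and perhaps more self-contained, since it does not require knowing simplicity in advance. The ODE computations and the identification of $\sigma$ from $w'(R_2)=\sigma\,w(R_2)$ are routine and yield exactly \eqref{radial} and \eqref{eig_an}.
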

\begin{rem}
We point out that by \eqref{eig_an},  we have that $\sigma_1(A_{R_1,R_2})$ is increasing with respect to the  radius  of the inner ball, $R_{1}$,  that is\begin{equation*}
\sigma_1(A_{R_1,R_2})<\sigma_1(A_{r_1,R_2}), \quad \text{if } r_{1}>R_{1}.
\end{equation*}
 \end{rem}
Moreover it holds
\begin{equation}\label{lim_0}
\lim_{R_{1}\to 0}\sigma_1(A_{R_1,R_2})=0,
\end{equation}
that is $\sigma_1(A_{R_1,R_2})$ tends to  the first trivial Steklov eigenvalue of the Laplacian for $R_1$ which goes to zero. Finally we stress that an easy computation gives that  $\sigma_1(A_{R_1,R_2})$ is decreasing with respect to the external radius $R_{2}$, that is 
\begin{equation*}\label{mon}
\sigma_1(A_{R_1,R_2})<\sigma_1(A_{r_1,\bar{R}}), \quad \text{if } \bar R<R_{2}.
\end{equation*}

\section{Upper and lower bounds for $\sigma_1(\Omega)$ and existence result}

In this Section  we prove an upper and lower  bound for $\sigma_1(\Omega)$   in terms of $R_m$ and  $R_M$, that are the minimal and  maximal distance from the origin of the outer boundary as defined in \eqref{re}.
Then, we prove  an existence results for a maximizer among convex sets with fixed inner ball and fixed volume and we also generalize it in the case of a suitable  not spherical hole.

\subsection{ Estimates  in terms of $R_m$ and $R_M$}
The proof follows  an idea used in \cite{KS} for the planar case  and in \cite{GM,V} for any dimension to obtain a lower bounds for the first Steklov Laplacian eigenvalue.
\begin{thm}
\label{lower}
Let $R_1>0$ and $\Omega_0\subset\R^n$ be an open bounded connected set with Lipschitz boundary such that  $B_{R_1}\Subset\Omega_0$ and  let $\Omega=\Omega_0\setminus \overline B_{R_1}$.  Then, it holds
\begin{equation}
\label{lb}
\ds \frac{1}{ \max_{\mathbb S^{n-1}} \left(\sqrt{1+ \frac{|\nabla_\tau \rho_0|^2}{\rho_0^2}}\right)} \left(\frac{R_m}{R_M}\right)^{n-1}  \ds \sigma_1 \left(A_{R_1,R_m}\right) \le\sigma_1(\Omega) \le \left(\frac{R_M}{R_m}\right)^{n-1}\sigma_1(A_{R_1,R_M}),
\end{equation}
where $R_m$ and $R_M$ are defined in \eqref{re}, $\rho_0$ is the radial function of $\Omega_0$ defined in \eqref
{ro} and $A_{R_1,R_m}$ is the spherical shell with radii $R_1$ and $R_m$.\\
Moreover, the equality case holds if and only if $\Omega$ is a ball $B_R$ centered at the origin of radius $R>0$.

\end{thm} 
\begin{proof}
Let $u\in H^1_{\partial B_{R_1}}(\Omega)$  be a positive eigenfunction for $\sigma_1(\Omega)$, then 
\begin{equation}
\label{q}
\sigma_1(\Omega)=\dfrac{\ds\int _{\Omega}|\nabla u|^2\;dx}{\ds\int_{\partial\Omega_0}u^2\;d\mathcal{H}^{n-1}}.
\end{equation}
By using spherical  coordinates and the notation introduced in Section $2$:
\[
\partial \Omega_0 = \{x\,\rho_0(x),  x \in \mathbb S^{n-1}\},
\]
the denominator in \eqref{q} becomes
\begin{equation*}
\ds\int_{\partial\Omega_0}u^2\;d\mathcal{H}^{n-1}= \ds \int_{\mathbb S^{n-1}} u^2 \,\,\sqrt{1+\left(\frac{|\nabla_\tau \rho_0|}{\rho_0}\right)^2}\,(\rho_0)^{n-1}\,d\mathcal H^{n-1}.
\end{equation*}
Then, we have
\begin{equation}
\label{den}
(R_m)^{n-1}  \int_{\mathbb S^{n-1}} u^2 \,\,d\mathcal H^{n-1} \le \ds\int_{\partial\Omega_0}u^2\;d\mathcal{H}^{n-1} \le (R_M)^{n-1} \max_{\mathbb S^{n-1}} \left(\sqrt{1+ \frac{|\nabla_\tau \rho_0|^2}{\rho_0^2}}\right) \int_{\mathbb S^{n-1}} u^2 \,\,d\mathcal H^{n-1}.
\end{equation}
Let us now take into account the numerator in \eqref{q}. Since
\[
\Omega=\{s\in \mathbb R^n \colon s=x\,r ,\, x \in \mathbb S^{n-1},\, R_1\le r\le \rho_0(x)  \} ,
\]
by using spherical coordinates and denoted by $R(y)=\rho_0(x(y))$, where  $x\colon y\in U\subset \mathbb R^{n-1}\to x(y) \in \mathbb S^{n-1}$ is a standard parametrization of the boundary of the unit ball in $\mathbb R^{n}$,   we get: 
\begin{equation}
\label{den1}
\ds\int _{\Omega}|\nabla u|^2\;ds=\displaystyle \int_{U} \int_{R_1}^{R(y)}\left\{\left(\frac{\partial u}{\partial r}\right)^2+\frac{1}{r^2}|\nabla_{\tau} u|^2\right\}r^{n-1}\sqrt{\tilde g}\, dr\,dy,\\
\end{equation}
where  $\sqrt {\tilde g}$ is the determinant of the matrix $\tilde g_{ij}$, that is the standard  metric on $\mathbb S^{n-1}$ and $\nabla_{\tau} u$ is the component of $\nabla u$ tangential to $\mathbb S^{n-1}$. Then, we have
\begin{multline}
\label{num}
\displaystyle \int_{U} \int_{R_1}^{R_m}\left\{\left(\frac{\partial u}{\partial r}\right)^2+\frac{1}{r^2}|\nabla_{\tau} u|^2\right\}r^{n-1}\sqrt{\tilde g}\, dr\,dy,\le\ds\int _{\Omega}|\nabla u|^2\;ds\le\\ \le \displaystyle \int_{U} \int_{R_1}^{R_M}\left\{\left(\frac{\partial u}{\partial r}\right)^2+\frac{1}{r^2}|\nabla_{\tau} u|^2\right\}r^{n-1}\sqrt{\tilde g}\, dr\,dy.
\end{multline}
Combining \eqref{den} and \eqref{num} and recalling \eqref{q},  we get
\begin{multline}
\ds \frac{1}{ \max_{\mathbb S^{n-1}} \left(\sqrt{1+ \frac{|\nabla_\tau \rho_0|^2}{\rho_0^2}}\right) } \left(\frac{R_m}{R_M}\right)^{n-1}  \ds \sigma_1 \left(A_{R_1,R_m}\right)  = \\\ds \frac{\displaystyle \int_{U} \int_{R_1}^{R_m}\left\{\left(\frac{\partial u}{\partial r}\right)^2+\frac{1}{r^2}|\nabla_{\tau} u|^2\right\}r^{n-1}\sqrt{\tilde g}\, dr\,dy,
}{\ds(R_M)^{n-1} \max_{\mathbb S^{n-1}} \left(\sqrt{1+ \frac{|\nabla_\tau \rho_0|^2}{\rho_0^2}}\right) \ds \int_{\mathbb S^{n-1}} u^2 \,\,d\mathcal H^{n-1}}
\le \sigma_1(\Omega) \le \\ \le\ds \frac{\displaystyle \int_{U} \int_{R_1}^{R_M}\left\{\left(\frac{\partial u}{\partial r}\right)^2+\frac{1}{r^2}|\nabla_{\tau} u|^2\right\}r^{n-1}\sqrt{\tilde g}\, dr\,dy,
}{\ds(R_m)^{n-1}  \ds\int_{\mathbb S^{n-1}} u^2 \,\,d\mathcal H^{n-1}} =\left(\frac{R_M}{R_m}\right)^{n-1}\sigma_1(A_{R_1,R_M}).
\end{multline}   

 Finally, we stress that  the equality case implies that  all the inequalities become equalities. So, we have that  $\nabla_{\tau}\rho_0=0$ and  $\rho_0(x)=R$, with $R>R_1$ constant.
 \end{proof}

\begin{rem}
We observe that the  lower bound in \eqref{lb} gives that $\sigma_1(\Omega)>0$ being  $R_1>0$ fixed. Moreover, \eqref{lb} also implies a continuity results: $\sigma_1(\Omega) \to 0 $ as  $R_1 \to 0$.
\end{rem}

\subsection{An upper bound for $\sigma_1(\Omega)$ for not spherical hole}

In this subsection we prove an upper bound in the case of a not spherical hole. \\
Let  $K\subset\mathbb{R}^n$ be  a convex body such that $K\Subset\Omega_0$ and let  $\Omega_K=\Omega_0\setminus \overline{K}$. In this case, according to \cite{ET}, the natural space of functions that we have to consider to define $\sigma_1(\Omega_K)$ are $C^\infty_{\partial K}(\Omega_K)$ and $H^1_{\de K}(\Omega_K)$. In particular the classical arguments of Calculus of Variations apply, as showed in \cite{PPS}, and $\sigma_1(\Omega_K)$ is well defined. 
Let us now assume that the volume $|\Omega|=\omega$ and the inradius  $\rho (K)=\tilde{r}$ of $K$  are fixed.  
		Let us consider $A_{\tilde{r},\tilde{R}}$ the spherical shell with radii $\tilde{r}$ and $\tilde{R}$, where $\tilde{R}$ is such that $|A_{\tilde{r},\tilde{R}}|=|\Omega|/2$. So, we have
		\begin{equation}\label{expl_radius}
		\tilde{R}=\left( \frac{|\Omega|}{2\omega_n}+\tilde{r}^n  \right)^{1/n}.
		\end{equation}
		We also consider the following test function $\varphi: \mathbb{R}^n\setminus K\to [0,\infty)$:
		\begin{equation}\label{test_gen}
		\varphi(x)=\begin{cases}
		d_K(x)\quad &\text{ if }  0 \leq d_K(x)\leq \tilde{R}\\
		\tilde{R}\quad  &\text{ if } d_K(x)\geq \tilde{R}
		\end{cases}, 
		\end{equation}
		where 
		$$ d_K(x):=\inf_{y\in\partial K}||x-y||.$$
		and we denote by $K_t$ the set
		\begin{equation}
		K_t=\{ x\in\mathbb{R}^n\setminus \overline{K}\;|\; d_K(x)<t \}.
		\end{equation}
		We have now to distinguish two cases.
		If $K_{\tilde{R}}\Subset\Omega$, then, using the test function \eqref{test_gen} in the variational characterization,  we have
		\begin{equation*}
		\begin{split}
		\sigma_1(\Omega)\leq \dfrac{\ds\int_{K_{\tilde{R}}} |\nabla d_K|^2\;dx}{\ds\int_{\partial\Omega_0}  d^2_K\;d\mathcal{H}^{n-1}} =\dfrac{|K_{\tilde{R}}|}{\tilde{R}^2 P(\Omega_0)}\leq \dfrac{|\Omega|}{\tilde{R}^2 n \omega_n^{1/n}|\Omega|^{1-1/n}}\\=\dfrac{|\Omega|^{1/n}}{n\omega_n^{1/n}\left(\frac{|\Omega|}{2\omega_n}+\tilde{r}^n\right)^{2/n}}=C(n,\tilde{r},|\Omega|),
		\end{split}
		\end{equation*}
		where we have used the  fact that $|\nabla d_K(x)|=1$ a.e., the classical isoperimetric inequality and \eqref{expl_radius}. \\
		Finally, let us consider the case when $K_{\tilde{R}}\not\subseteq\Omega$. We will use the following notations: $\partial ^i\Omega_0=\partial\Omega_0 \cap K_{\tilde{R}}$ and $\partial^e\Omega_0=\partial \Omega_0\setminus\partial^i\Omega_0$. Using as before the  test function \eqref{test_gen}, we have 
		\begin{equation} \label{UCH}
		\begin{split}
		\sigma_1(\Omega)\leq
		\dfrac{\ds\int_{K_{\tilde{R}}\cap \Omega}|\nabla d_K|^2 dx}
		{\ds\int _{\partial^i\Omega_0}  d^2_K d\mathcal{H}^{n-1}   +\int_{\partial^e\Omega_0} \tilde{R}^2 d\mathcal{H}^{n-1} }
		\leq \dfrac{|K_{\tilde{R}}\cap \Omega|}{\tilde{R}^2 |\partial^e\Omega_0|}\\\leq \dfrac{2 |\Omega|}{\tilde{R}^2n\omega_n^{1/n}|\Omega|^{1-1/n}}=2C(n,\tilde{r},|\Omega|),
		\end{split}
		\end{equation}
		where we have used the relative isoperimetric inequality (see \cite[Proposition 2.4]{PPS} and the references therein).\\

\subsection{The existence result}

Inequality \eqref{upper_bound} ensures that the Steklov-Dirichlet eigenvalue $\sigma_1(\Omega)$, defined in \eqref{eigSD_intro}, is bounded from above if the volume of $\Omega$ is fixed. In this section we prove the existence of a maximizer among convex sets with  fixed internal ball and fixed volume.
Let  $\omega>0$ and $R_1>0$ be fixed, then by $\mathcal{A}_{R_1}(\omega)$ we will denote  the class of convex sets having measure $\omega$ and containing the ball $B_{R_1}$, that is
\begin{equation*}
\mathcal{A}_{R_1}(\omega) := \big\{D = K \setminus \overline{B}_{R_1},\:\ K\subseteq\mathbb{R}^n \,\,\text{open, bounded, convex}:\ B_{R_1}\Subset K,\  V(D)= \omega \big \}.
\end{equation*}
The main theorem of this section is the following existence result.
\begin{thm}\label{esi}
	Let $\omega>0$ and $R_1>0$ be fixed. There exists a set $E\in\mathcal{A}_{R_1}(\omega) $, such that 
	\begin{equation*}
	\max_{D\in\mathcal{A}_{R_1}(\omega)}  \sigma_1(D)=\sigma_1(E).
	\end{equation*}
\end{thm}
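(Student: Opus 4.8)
The plan is to use the direct method of the calculus of variations, with the key ingredient being the compactness of convex sets in the Hausdorff topology together with the continuity (or at least upper semicontinuity) of $\sigma_1$ along such sequences. First I would take a maximizing sequence $D_k = K_k \setminus \overline{B}_{R_1} \in \mathcal{A}_{R_1}(\omega)$, so that $\sigma_1(D_k) \to \sup_{D\in\mathcal{A}_{R_1}(\omega)} \sigma_1(D) =: \Lambda$, which is finite by the upper bound \eqref{upper_bound}. The convex bodies $K_k$ all contain the fixed ball $B_{R_1}$, so they are uniformly bounded away from degeneration in one direction; the remaining task is to get a uniform upper bound on their diameters. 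For this I would argue by contradiction: if $\diam(K_k)\to\infty$ along a subsequence, then (since $V(K_k)=\omega+\omega_n R_1^n$ is fixed) the sets $K_k$ become thin slab-like regions, and one can construct an explicit competitor test function in \eqref{eig} — essentially a function depending on the ``long'' direction — showing $\sigma_1(D_k)\to 0$, contradicting $\Lambda>0$ (note $\Lambda \ge \sigma_1(A_{R_1,R_2})>0$ for the concentric shell of the right volume). Hence $\sup_k \diam(K_k) < \infty$.

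With a uniform bound on the diameters and the common inclusion $B_{R_1}\subset K_k$, the Blaschke selection theorem gives a subsequence (not relabeled) with $K_k \xrightarrow{\mathcal H} K$ for some convex body $K$ with $B_{R_1}\subseteq K$. Hausdorff convergence of convex bodies implies convergence in measure, so $V(K\setminus\overline B_{R_1})=\omega$ and $E:=K\setminus\overline B_{R_1}\in\mathcal{A}_{R_1}(\omega)$ — provided $B_{R_1}\Subset K$, i.e.\ the inner ball does not touch the boundary of the limit. If it did, one would need to rule out $\sigma_1(D_k)$ staying bounded below; I would handle this either by an a~priori lower bound on the distance $\dist(\partial B_{R_1},\partial K_k)$ coming from $\Lambda>0$ (again via a suitable test function concentrated near a contact point forcing $\sigma_1\to 0$), or by absorbing this degenerate case into the thin-set analysis above. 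The main obstacle is precisely this: establishing the two ``non-degeneracy'' facts — bounded diameter and inner ball strictly inside — each of which requires building an explicit test function realizing $\sigma_1(D_k)\to 0$ in the corresponding degenerate regime.

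Finally I would prove that $\sigma_1$ is continuous along this sequence, which together with $\sigma_1(D_k)\to\Lambda$ yields $\sigma_1(E)=\Lambda=\max_{D\in\mathcal{A}_{R_1}(\omega)}\sigma_1(D)$. For the upper semicontinuity $\limsup_k \sigma_1(D_k)\le \sigma_1(E)$ I would take the first eigenfunction $u$ of $E$, which belongs to $H^1_{\partial B_{R_1}}(E)$, extend or restrict it appropriately to each $D_k$ (using that $K_k\to K$ and that $u$ can be taken smooth up to the relevant boundary pieces, or via a density argument with functions from $C^\infty_{\partial B_{R_1}}$), and plug it into the Rayleigh quotient \eqref{eig}, using the change-of-variables formula \eqref{cambio} and Theorem~\ref{fr} to pass to the limit in the boundary integral over $\partial K_k$ and in the Dirichlet integral. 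For the reverse inequality $\liminf_k \sigma_1(D_k)\ge \sigma_1(E)$ I would take the normalized first eigenfunctions $u_k$ on $D_k$, obtain uniform $H^1$ bounds (using \eqref{upper_bound} and the normalization $\int_{\partial K_k} u_k^2\,d\mathcal H^{n-1}=1$, controlled via a uniform trace inequality valid on the class of convex sets with fixed inner ball and bounded diameter), extract a weakly convergent subsequence, and pass to the limit using lower semicontinuity of the Dirichlet energy and the strong convergence of the traces on $\partial K_k$ guaranteed again by Hausdorff convergence and Theorem~\ref{fr}. Assembling the two inequalities completes the proof.
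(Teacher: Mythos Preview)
Your overall architecture coincides with the paper's --- maximizing sequence, Blaschke selection, then pass to the limit in the Rayleigh quotient by extending the first eigenfunction $u$ of the limit set and using it as a test function on each $D_k$ (dominated convergence for the gradient term, Theorem~\ref{fr} for the boundary term). Two places where the paper is more economical than your plan. First, the diameter bound requires no test-function construction: since $V(K_k)$ is fixed, $\diam(K_k)\to\infty$ would force $P(K_k)\to\infty$, and then \eqref{Cheeger} gives $\rho(K_k)\le nV(K_k)/P(K_k)\to 0$, directly contradicting $\rho(K_k)>R_1$; this is one line, whereas producing an explicit competitor with small Rayleigh quotient on a long thin convex body that still contains $B_{R_1}$ is genuinely fiddly. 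Second, your $\liminf$ half is unnecessary: once $E\in\mathcal{A}_{R_1}(\omega)$ you have $\sigma_1(E)\le\Lambda$ for free, so only $\limsup_k\sigma_1(D_k)\le\sigma_1(E)$ is needed, and the uniform trace inequality~/ weak $H^1$-compactness machinery can be dropped entirely. Your worry about whether $B_{R_1}\Subset K$ survives in the Hausdorff limit is legitimate; the paper simply asserts $E\in\mathcal{A}_{R_1}(\omega)$ without isolating this point, so you are not missing an argument that is actually present there.
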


\begin{proof} 
	
The upper bound \eqref{upper_bound} implies that there exists $M>0$ such that     $$\sup_{D \in \mathcal{A}_{R_1}(\omega)} \sigma_1(D) = M < +\infty.$$ Hence, there exists a sequence $\{E_k\}_{k\in\N} \subseteq \mathcal{A}_{R_1}(\omega) $ such that 
\begin{equation*}
\lim_{k\to\infty} \sigma_1(E_k) = M. 
\end{equation*}
In order to show the desired result,  we need to prove the existence of a set $E\in\mathcal{A}_{R_1}(\omega)$ such that  $E_k\stackrel{\mathcal H}{\longrightarrow} E$ with $\sigma_1(E)=M$.

Firstly we prove that, up to a subsequence,  
 $\{E_k\}_{k\in\N}$ converges to a certain $E\in\mathcal{A}_{R_1}(\omega)$ in the Hausdorff metric. 
 
Being $\{E_k\}_{k\in\N} \subseteq \mathcal{A}_{R_1}(\omega) $
then, for every $k \in \mathbb N$ there exists a convex set $E_{0,k}$, such that $B_{R_1} \Subset E_{0,k}$,
\begin{equation*}
	E_k = E_{0,k} \setminus \overline B_{R_1}
\end{equation*}
and
$$\omega_0:=V(E_{0,k}) = \omega + \omega_n R_1^n.$$
By the Blaschke selection Theorem and the continuity of the volume functional with respect to the Hausdorff measure (see \cite{Sc} as a reference),  it is enough to show that $\{E_{0,k}\}_{k\in\N}$ is equibounded.

 We proceed by contradiction  assuming that 
\begin{equation}\label{diam}
\lim_{k\to+\infty} {\rm diam}(E_{0,k})=+\infty.
\end{equation}
Inequality \eqref{Cheeger} gives
\begin{equation}\label{Cheeger_type}
	\rho(E_{0,k})\leq \dfrac{nV(E_{0,k})}{P(E_{0,k})},
\end{equation}
where $\rho(E_{0,k})$ is the inradius of  $E_{0,k}$ defined in \eqref{inradius_def}. 

The assumption \eqref{diam} and the inequality \eqref{vol_diam} imply  that the right-hand side in \eqref{Cheeger_type} tends to $0$ as $k\to+\infty$, being $V(E_{0,k})$ fixed. Therefore, by \eqref{Cheeger_type},  we have
$$ \lim_{k\to+\infty}\rho(E_{0,k})=0,$$
which is in contradiction with 
$$0<R_{1}<\rho(E_{0,k}).$$
Hence, the equiboundeness is proved and then  $ \{E_k\}_{k\in\N}$ converges up to a subsequence  to a set $E \in  \mathcal{A}_{R_1}(\omega)$  in the Hausdorff metric. Hence, by the definition of $\mathcal A_{R_1}(\omega)$, there exists an open bounded convex set  $E_0$ such that $E=E_0\setminus\overline B_{R_1}$.
In order to complete the  proof, we will prove  that 
\begin{equation}
\label{fine}
M=\lim_k\sigma_1(E_k)\leq \sigma_1(E).
\end{equation}
Let $u\in H^1_{\de B_{R_1}}(E)$ be the first positive eigenfunction associated to $\sigma_1(E)$,  such that 
\begin{equation*}
\int_{\de E_0} u^2\;d\mathcal{H}^{n-1}=1.
\end{equation*}
Hence, we have  
\begin{equation*}
\sigma_1(E)=\ds\int_{E}|\nabla u|^2\;dx.
\end{equation*}
By the extension theorem (see for instance  \cite{C,St} for Lipschitz domains), we can extend $u$ in $\mathbb{R}^n$ obtaining a function $\tilde u \in H^1_{\de B_{R_1}}\left(\R^n\right)$ such that $\tilde u=u$, a.e. in  $E$, and 
\begin{equation*}
\|\tilde u\|_{H^1_{\de B_{R_1}}\left(\R^n\right)}\le c(n) \| u\|_{H^1_{\de B_{R_1}}\left(E\right)},
\end{equation*}
for some positive constant $c=c(n)$.
 For every $k\in\mathbb{N}$ we define $u_k$ as the restriction of $\tilde{u}$ in $E_k$.
Using $u_k$ as a  test function for $\sigma_1(E_k)$, we have
\begin{equation}\label{sigma}
\sigma_1(E_k)\leq \dfrac{\ds\int_{E_k}|\nabla \tilde{u}|^2\;dx}{\ds\int_{\partial E_{0,k}}\tilde{u}^2\;d\mathcal{H}^{n-1}}.
\end{equation}
In order to get \eqref{fine},  we prove that the right-hand side in \eqref{sigma} converges to $\sigma_1(E)$. 
We  observe that 
\begin{equation}\label{gradient}
\ds\int_{E_k}|\nabla \tilde{u}|^2\;dx-\ds\int_{E}|\nabla \tilde{u}|^2\;dx=\ds\int_{\mathbb{R}^n}\left(\chi_{E_k}\;-\chi_{E}\right)|\nabla \tilde{u}|^2\;dx\to 0,
\end{equation}
since $E_k\to E$ in the Hausdorff metric and by the dominated convergence theorem.

In order to conclude the proof we have to prove 
\begin{equation} \label{denom}
	\int_{\partial E_{0,k}} \tilde{u}^2 \,d\mathcal{H}^{n-1}\to \int_{\partial E_0} u^2\,d\mathcal{H}^{n-1} = 1. 
\end{equation}
The equiboundedness of the sequence $\{E_{0,k}\}_{k\in \mathbb{N}}$ guarantees the existence of a ball $B_R$ centered at the origin with radius $R>0$ such that $E_{0,k}\subset B_R$, for every $k\in\mathbb{N}$. Extending $\tilde u$ to zero in $B_{R_1}$ and by using an approximation argument, we can suppose that $\tilde u \in C^{\infty}(B_R)$. Then \eqref{denom} follows by Theorem \ref{fr}.
Finally,  passing to the limit in \eqref{sigma}, by \eqref{gradient} 
and \eqref{denom}, we get \eqref{fine}, that is
\[
M\leq \sigma_1(E)
\]
and, consequently, we can conclude that 
\[
\sigma_1(E)=M,
\]
obtaining the desired claim.
	\end{proof}

\begin{rem} We observe that the above existence result holds even when we consider $\Omega_K=\Omega_0 \setminus K$, where  $K$ is a convex body strictly contained in $\Omega_0$. Indeed, by using the upper bound \eqref{UCH}, the proof can be done following  line by line  the one just discussed in the case of a spherical hole.\end{rem}
\section{Proof of the main result}
In this section we give the proof of the main result. The idea is to take as  test function in the quotient \eqref{eig} the eigenfunction of the spherical shell with the same measure as $\Omega$.
Before giving the proof,  we need a preliminary result.
\begin{lem} \label{convexity}Let $R_1>0$ and let $f$ be the function defined in $]0, +\infty [$ as
\begin{equation*}
    f(t)= 
    \begin{cases}
    \log^2 \left(\frac{\sqrt{t}}{ R_1}\right) \sqrt{t} &  n=2 \\
    \big( \frac{1}{R_1^{n-2}}-\frac{1}{t^{\frac{n-2}{n}}}\big)^2 \; t^{\frac{n-1}{n}} & n\ge 3.
    \end{cases}
\end{equation*}
Then, $f$ is convex for every $\alpha_-(n) R_1^n \le t \le  \alpha_+(n) R_1^n$, where
\begin{equation*}
    \alpha_\pm(n) = 
    \begin{cases}
    e^{\pm 2 \sqrt{2}}  & n=2 \\
    \bigg[\frac{(n-1)\pm (n-2)\sqrt{2(n-1)}}{n-1}\bigg]^{\frac{n}{n-2}} & n \ge 3.
    \end{cases}
\end{equation*}
\end{lem}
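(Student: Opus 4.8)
The plan is to deduce convexity from the sign of the second derivative: since $f$ is smooth on $(0,+\infty)$, it is convex on a subinterval precisely where $f''\ge 0$ there. First I would scale out the parameter $R_1$. Setting $t=R_1^2u$ if $n=2$ and $t=R_1^nu$ if $n\ge 3$, one checks that $f(t)=R_1^{3-n}\,g(u)$ with $g$ independent of $R_1$, where
\[
g(u)=\tfrac14\,\sqrt u\,\log^{2}u\quad(n=2),\qquad g(u)=\bigl(1-u^{-\frac{n-2}{n}}\bigr)^{2}u^{\frac{n-1}{n}}\quad(n\ge 3),
\]
where for $n\ge 3$ the bracket enters squared, matching the $\log^{2}$ in the case $n=2$. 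Since $R_1^{3-n}>0$ and $t\mapsto u$ is affine, convexity of $f$ on $[\alpha_-(n)R_1^{n},\alpha_+(n)R_1^{n}]$ is equivalent to convexity of $g$ on $[\alpha_-(n),\alpha_+(n)]$, so it suffices to locate the set where $g''\ge 0$.

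For $n=2$ a direct differentiation shows that $g''(u)$ has the same sign as $8-\log^{2}u$, so $g''\ge 0$ if and only if $\log^{2}u\le 8$, that is $e^{-2\sqrt2}\le u\le e^{2\sqrt2}$, which is exactly $\alpha_-(2)\le u\le\alpha_+(2)$. For $n\ge 3$ I would expand
\[
g(u)=u^{\frac{n-1}{n}}-2\,u^{\frac1n}+u^{\frac{3-n}{n}}
\]
and differentiate twice term by term; factoring out the lowest power $u^{\frac{3-3n}{n}}>0$ and setting $v:=u^{\frac{n-2}{n}}$, the sign of $g''$ matches that of the downward quadratic
\[
Q(v)=-(n-1)v^{2}+2(n-1)v+(2n^{2}-9n+9).
\]
Thus $g''\ge 0$ exactly for $v$ between the two roots of $Q$. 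The crucial step is the discriminant, which simplifies because of the identity $(n-1)+(2n^{2}-9n+9)=2(n-2)^{2}$, giving $\Delta=4(n-1)\bigl[(n-1)+(2n^{2}-9n+9)\bigr]=8(n-1)(n-2)^{2}$. Hence the roots are $v_\pm=\dfrac{(n-1)\pm(n-2)\sqrt{2(n-1)}}{n-1}$, and inverting $v=u^{\frac{n-2}{n}}$ produces the endpoints $u_\pm=v_\pm^{\frac{n}{n-2}}=\alpha_\pm(n)$, as claimed.

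I expect the only delicate point to be the discriminant in the case $n\ge 3$: the quadratic $Q$ has cumbersome coefficients, and the whole result rests on the fortunate collapse $(n-1)+(2n^{2}-9n+9)=2(n-2)^{2}$, which is exactly what yields the clean factor $\sqrt{2(n-1)}$ in $\alpha_\pm(n)$; everything else is routine bookkeeping. I would also record that $\alpha_-(n)\ge 0$, with $\alpha_-(3)=0$, so that the interval is well posed, and note that it is the two-sided nature of $Q$ --- as opposed to a single linear threshold --- that produces a genuine interval, which is precisely why the bracket appears squared for $n\ge 3$, consistently with the $n=2$ formula.
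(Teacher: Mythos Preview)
Your approach is correct and essentially the same as the paper's: both compute $f''$ directly and, for $n\ge 3$, reduce its sign to that of a downward quadratic via the substitution $v=t^{(n-2)/n}$ (you simply scale out $R_1$ first, which is a harmless cosmetic simplification). You have also correctly inserted the missing square on the bracket in the $n\ge 3$ case --- the statement as printed omits it, but the paper's own second-derivative formula (quadratic rather than linear in $y$) and the application to $w^{2}$ in the proof of Theorem~\ref{main} confirm that $\bigl(R_1^{2-n}-t^{-(n-2)/n}\bigr)^{2}\,t^{(n-1)/n}$ is what is meant.
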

\begin{proof} Let us begin with the bidimensional case. After an easy computation one can see that
\begin{equation*}
    f''(t) = \frac{2-\log^2 (\sqrt{t}/ R_1) }{4t\sqrt{t}},
\end{equation*}
which gives immediately the conclusion.

Now let us consider $n\ge 3$. After some computations the second derivative of the function is the following
\begin{equation*}
    f''(t) = t^{\frac{3}{n}-3} \bigg[ \frac{R_1^{4-2n}}{n}\bigg( \frac{1}{n}-1\bigg)t^{2-\frac{4}{n}} + \frac{2R_1^{2-n}}{n} \bigg(1-\frac{1}{n}\bigg) t^{1-\frac{2}{n}} + \bigg( \frac{3}{n}-2 \bigg) \bigg( \frac{3}{n}-1 \bigg) \bigg].
\end{equation*}
If we call $y = t^{1-\frac{2}{n}}$, the previous function is non-negative if and only if 
\begin{equation*}
g(y) = \frac{R_1^{4-2n}}{n}\bigg( \frac{1}{n}-1\bigg)y^2 + \frac{2R_1^{2-n}}{n} \bigg(1-\frac{1}{n}\bigg)y + \bigg( \frac{3}{n}-2 \bigg) \bigg( \frac{3}{n}-1 \bigg) \ge 0.
\end{equation*}
It is not difficult to check that the zeros of $g(y)$ are
\begin{equation*}
    y_{\pm} = R_1^{n-2}\, \frac{n-1 \pm (n-2)\sqrt{2(n-1)}}{n-1}.
\end{equation*}
Being  $y_{-}= 0$ for $n= 3$ and $y_{-}<0$ for every $n\ge 4$ it must be $y_{-} \le y \le y_+$, which concludes the proof.
\end{proof}

Now we can prove the main result.
\begin{proof}[Proof of the Theorem \ref{main}]
Let us consider the fundamental solution $w$, given in \eqref{radial}, as a test function in \eqref{eig}. Then, 
\begin{equation*}
 \sigma_1(\Omega) \le \displaystyle \frac{\ds\int_{\Omega} |\nabla w |^2 \,dx}{\ds\int_{\partial\Omega_0} w^2  \,d \mathcal H^{n-1}}.   
\end{equation*}
In order to prove the result we will show that
\begin{equation}
\label{claim}
\frac{\ds \int_{\Omega} |\nabla w |^2 \,dx}{\ds \int_{\partial\Omega_0} w^2  \,d \mathcal H^{n-1}}\le\frac{\ds \int_{A_{R_1,R_2}} |\nabla w |^2 \,dx}{\ds \int_{\partial B_{R_2}} w^2  \,d \mathcal H^{n-1}}= \sigma_1(A_{R_1,R_2}).
\end{equation}

Since $|\nabla w|^2$ is a non-negative radially symmetric decreasing function for  any $n\ge 2$, it coincides with its Schwarz symmetrization. Hence by  the Hardy-Littlewood inequality \cite[Th. 1.2.2]{K}, 
we have
\begin{equation}
\label{hl}
\begin{split}
\int_{\Omega} |\nabla w|^2\ dx & =\int_{\Omega_0} |\nabla w|^2\ dx-\int_{B_{R_1}} |\nabla w|^2\ dx\\ 
& \leq \int_{B_{R_2}} |\nabla w|^2\ dx-\int_{B_{R_1}} |\nabla w|^2\ dx=\int_{A_{R_1,R_2}} |\nabla w|^2\ dx.
\end{split}
\end{equation}

Hence, it remains to prove the following inequality
\begin{equation}
\label{key}
\int_{\partial\Omega_0} w^2  \,d \mathcal H^{n-1}\ge \int_{\partial B_{R_2}} w^2  \,d \mathcal H^{n-1}.
\end{equation}
Let $\rho_0$ be the radial function of $\Omega_0$ defined in \eqref{ro}. By \eqref{rad}, $\partial \Omega_0$ can be represented as follows 
\[
\partial \Omega_0 = \{ x\,\rho_0(x),  x \in \mathbb S^{n-1}\},
\]
with $R_1<\rho_0(\theta)\le\tilde{R}$ and $\bar R$ defined in \eqref{bound}. \\
Firstly, let us consider the case $n=2$. If we denote  by $z(\theta)=R^2(\theta)=\rho_0^2(x(\theta))$, being $V(\Omega_0)=V(B_{R_2})$, it holds
\begin{equation}
\label{vol}
R_2=\displaystyle\sqrt{\frac{1}{2\pi}\int_0^{2\pi}z(\theta)d\theta}.
\end{equation}
Moreover,  we get
\begin{equation}
\begin{split}
\int_{\partial\Omega_0}w^2 \,ds=&\int_{\partial\Omega_0} \left(\log(|x|)-\log R_1\right)^2\ ds=\int_0^{2\pi}\log^2\left(\frac{R(\theta)}{R_1}\right)R(\theta)\sqrt{1+\left(\frac{R'(\theta)}{R(\theta)}\right)^2}\ d\theta\\
&\geq \int_0^{2\pi}\log^2\left(\frac{R(\theta)}{R_1}\right)R(\theta)\ d\theta=\int_0^{2\pi}\log^2\left(\frac{\sqrt{z(\theta)}}{R_1}\right)\sqrt{z(\theta)}\ d\theta 
\\\label{last}&\ge 2\pi\log^2\left(\frac{1}{R_1}\sqrt{\frac{\int_0^{2\pi}z(\theta)d \theta}{2\pi}}\right)\sqrt{\frac{\int_0^{2\pi}z(\theta)d\theta}{2\pi}}=\\ &= 2\pi R_2\log^2\left(\frac{R_2}{R_1}\right)=\int_{\partial B_{R_2}}w^2 \, ds,
\end{split}
\end{equation}
where, since $\rho_0(x)\le \bar R$, the last inequality follows  by Lemma \ref{convexity} and by Jensen's inequality.
This conclude the proof of  \eqref{key} in the bidimensional case. 

Now, let us consider the case  $n\ge 3$ and we proceed in a similar way.

Moreover since
\begin{equation*}
V(\Omega_0)=\frac 1 n \int_{\mathbb S^{n-1}} \rho_0^n(x) \,d\mathcal H^{n-1}
\end{equation*}
and being $V(\Omega_0)=V(B_{R_2})$, it holds
\begin{equation}
\label{voln}
R_2=\displaystyle\left(\frac{1}{n\omega_n}\int_{\mathbb S^{n-1}}z(x)\, d \mathcal H^{n-1}\right)^{\frac 1 n},
\end{equation}
where  $z(x)=\rho_0^n(x)$.
Then,  we have
\begin{equation*}
    \begin{split}
       \int_{\partial\Omega_0}w^2 \,d\mathcal H^{n-1}=& \int_{\partial \Omega_0} \bigg( \frac{1}{R_1^{n-2}} - \frac{1}{|x|^{n-2}}\bigg)^2\,d\mathcal{H}^{n-1} \\     
        &=\int_{\mathbb S^{n-1}} \bigg( \frac{1}{R_1^{n-2}} - \frac{1}{(\rho_0(x))^{n-2}}\bigg)^2  (\rho_0(x))^{n-1}\sqrt{1+\left(\frac{\nabla_\tau \rho_0(x)}{\rho_0(x)}\right)^2}\,d\mathcal H^{n-1} \\
        &\ge\int_{\mathbb S^{n-1}} \bigg( \frac{1}{R_1^{n-2}} - \frac{1}{(z(x))^{\frac{n-2}{n}}}\bigg)^2  (z(x))^{\frac{n-1}{n}}\,d\mathcal H^{n-1} \\
        & \ge n\omega_n \bigg[ \frac{1}{R_1^{n-2}} -  \frac{n\omega_n} {\left(\int_{\mathbb S^{n-1}} z(x)\,d\mathcal H^{n-1}\right)^{\frac{n-2}{n}} } \bigg]^2 \bigg( \frac{\int_{\mathbb S^{n-1}} z(x)\,d\mathcal H^{n-1}}{n \omega_n} \bigg)^{\frac{n-1}{n}}\\
	&= n\omega_n \bigg( \frac{1}{R_1^{n-2}} - \frac{1}{R_2^{n-2}} \bigg)^2 R_2^{n-1} = \int_{\partial B_{R_2}} w^2\,d\mathcal{H}^{n-1}.
    \end{split}
\end{equation*}
where last inequality follows by Lemma \ref{convexity} and by Jensen's inequality,  being $\rho_0(x)\le \bar R$. This gives \eqref{key} for $n \ge 3$ and concludes the proof. \end{proof}

\section{Some remarks about the perimeter constraint }
The estimate \eqref{Upperp} states that the first Steklov-Dirichlet eigenvalue is bounded from above also when we keep the outer perimeter and the radius of the inner ball fixed. So,  it is natural to investigate if there exists a set which maximizes $\sigma_1(\Omega)$ in the following class  
\begin{equation*}
\mathcal{B}_{R_1}(\kappa) := \big\{D = K \setminus \overline{B}_{R_1}\ , \ K \subset \mathbb{R}^n, \,\,\text{open, convex} : \ B_{R_1}\Subset K,\  P(K)= \kappa \big \},
\end{equation*}where $R_1>0$ and $\kappa>n \omega_n R_1^{n-1}.$
Arguing as Theorem \ref{esi}, we obtain the following existence result under a perimeter constraint.
\begin{thm}
	Let $\kappa>n\omega_nR_1^{n-1}$ be fixed. There exists a set $\Omega\in\mathcal{B}_{R_1}(\kappa) $ such that 
	\begin{equation*}
	\sup_{D\in\mathcal{B}_{R_1}(\kappa) }  \sigma_1(D)=\sigma_1(\Omega).
	\end{equation*}
\end{thm}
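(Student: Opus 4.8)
The plan is to follow, \emph{mutatis mutandis}, the proof of Theorem~\ref{esi}, replacing the volume constraint by the perimeter constraint and using the a priori bound \eqref{Upperp} in place of \eqref{upper_bound}. Since $\kappa$ is fixed, \eqref{Upperp} gives $\sigma_1(D)\le C(n)\,\kappa^{1/(n-1)}/R_1^{2}$ for every $D=K\setminus\overline B_{R_1}\in\mathcal B_{R_1}(\kappa)$ (here $P(K)=\kappa$), so that $M:=\sup_{D\in\mathcal B_{R_1}(\kappa)}\sigma_1(D)<+\infty$. One then picks a maximizing sequence $E_k=E_{0,k}\setminus\overline B_{R_1}\in\mathcal B_{R_1}(\kappa)$, with $E_{0,k}$ open, bounded, convex, $B_{R_1}\Subset E_{0,k}$ and $P(E_{0,k})=\kappa$, such that $\sigma_1(E_k)\to M$, and the goal is to produce $\Omega\in\mathcal B_{R_1}(\kappa)$ with $\sigma_1(\Omega)=M$.

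The only point where the argument genuinely differs from Theorem~\ref{esi} is the compactness of $\{E_{0,k}\}_{k\in\N}$: there the Cheeger-type inequality \eqref{Cheeger} was combined with the fact that the volume was fixed, whereas here the constraint does not control the volume directly. Instead, by the isoperimetric inequality the volumes are uniformly bounded, $V(E_{0,k})\le c_n\,P(E_{0,k})^{n/(n-1)}=c_n\,\kappa^{n/(n-1)}$; on the other hand, since $E_{0,k}$ is convex and contains $B_{R_1}$, it contains $\operatorname{conv}(B_{R_1}\cup\{p\})$ for every $p\in E_{0,k}$, a set of volume at least $c(n)\,R_1^{\,n-1}|p|$. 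Choosing $p_k\in E_{0,k}$ with $|p_k|\ge\tfrac14\diam(E_{0,k})$ (which exists because $\diam(E_{0,k})\le 2\sup_{x\in E_{0,k}}|x|$), these two bounds force $\diam(E_{0,k})$ to be uniformly bounded, i.e.\ $\{E_{0,k}\}_{k\in\N}$ is equibounded. By the Blaschke selection theorem, up to a subsequence $E_{0,k}\stackrel{\mathcal H}{\longrightarrow}E_0$ with $E_0$ a convex body, and since $B_{R_1}\subseteq E_{0,k}$ for every $k$ also $B_{R_1}\subseteq E_0$; moreover the perimeter (surface area) is continuous on convex bodies with respect to Hausdorff convergence (see \cite{Sc}), so $P(E_0)=\lim_k P(E_{0,k})=\kappa$. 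Thus $\Omega:=E_0\setminus\overline B_{R_1}\in\mathcal B_{R_1}(\kappa)$.

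The passage to the limit $M\le\sigma_1(\Omega)$ is then word for word as in Theorem~\ref{esi}: one takes the first positive eigenfunction $u$ of $\sigma_1(\Omega)$ normalized by $\int_{\partial E_0}u^{2}\,d\mathcal H^{n-1}=1$, extends it to $\tilde u\in H^1_{\partial B_{R_1}}(\R^n)$ via the extension theorem, and uses $\tilde u|_{E_k}$ as a test function in the Rayleigh quotient for $\sigma_1(E_k)$. The numerator $\int_{E_k}|\nabla\tilde u|^{2}\,dx\to\int_{\Omega}|\nabla\tilde u|^{2}\,dx=\sigma_1(\Omega)$ by dominated convergence, since $\chi_{E_k}\to\chi_{\Omega}$ in $L^1(\R^n)$ as a consequence of $E_{0,k}\stackrel{\mathcal H}{\longrightarrow}E_0$; the denominator $\int_{\partial E_{0,k}}\tilde u^{2}\,d\mathcal H^{n-1}\to\int_{\partial E_0}u^{2}\,d\mathcal H^{n-1}=1$ by Theorem~\ref{fr}, the equiboundedness allowing one to fix a ball $B_R\supseteq E_{0,k}$ and to take $\tilde u$ smooth there. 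Passing to the limit in $\sigma_1(E_k)\le \int_{E_k}|\nabla\tilde u|^{2}\,dx\big/\int_{\partial E_{0,k}}\tilde u^{2}\,d\mathcal H^{n-1}$ gives $M=\lim_k\sigma_1(E_k)\le\sigma_1(\Omega)$, and since $\sigma_1(\Omega)\le M$ by definition of $M$ we conclude $\sigma_1(\Omega)=M$.

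The main obstacle is exactly the compactness step: unlike in Theorem~\ref{esi}, the constraint no longer directly bounds the volume, so one must first convert the perimeter bound into a volume bound through the isoperimetric inequality and then exploit the elementary fact that a convex body containing $B_{R_1}$ with bounded volume has bounded diameter. Everything else — in particular the lower semicontinuity of $\sigma_1$ along the maximizing sequence — transfers verbatim from the proof of Theorem~\ref{esi}.
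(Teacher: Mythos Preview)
Your proof is correct and is exactly what the paper intends: the paper gives no separate argument for this theorem, writing only ``Arguing as Theorem~\ref{esi}'', and your proposal carries out that adaptation in full detail. The one place where a genuine modification is required is the equiboundedness step (with the perimeter rather than the volume fixed, the Cheeger inequality \eqref{Cheeger} no longer yields a contradiction directly), and your fix via the isoperimetric bound on $V(E_{0,k})$ together with the cone estimate $V(\operatorname{conv}(B_{R_1}\cup\{p\}))\ge c(n)R_1^{\,n-1}|p|$ handles this cleanly; everything else transfers verbatim from Theorem~\ref{esi}.
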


\begin{rem}
We stress that inequality \eqref{hl} continues to hold true  even if we fix the perimeter of $\Omega_0$. Indeed  the isoperimetric inequality ensures that the ball $B_{R_2}$ centered at the origin and having the same measure than $\Omega_0$ is contained in the ball centered at the origin and having the same perimeter than $\Omega_0$. 

On the other hand  we cannot prove, instead, the  inequality \eqref{key} under the perimeter constraint in order to obtain that the spherical shell is still a maximum for $\sigma_1(\Omega)$.
Indeed, if we proceed as in the  proof of Theorem \ref{main}, for instance in the planar case, equation \eqref{vol} has to be replaced by the following inequality:
\begin{equation}
\label{per}
2\pi R_2 = P(B_{R_2})=P(\Omega_0) = \int_0^{2\pi} R(\theta) \sqrt{1 + \left(\frac{R'(\theta)}{R(\theta)}\right)^2}\, d\theta \ge \int_0^{2\pi} R(\theta)\,d\theta,
\end{equation}
where $R(\theta)=\rho_0(x(\theta))$
Then, in the last step of \eqref{last}, after using  Jensen's inequality, we do not obtain the  first Steklov-Dirichlet eigenvalue of the spherical shell, since \eqref{per} is not an equality.

In support of this fact, we   give the following numerical counterexample obtained by using \texttrademark{WxMaxima}. 
We consider $R_1=10^{-5}$ and $\Omega_0$ an ellipse with the same perimeter as $A_{R_1,1}$. Let $a$ and $b$ the semi-axes of the ellipse. In order to compute the integral over the ellipse, we used the formula $P(\Omega_0)=2\pi\sqrt\frac{{a^2+b^2}}2 $, which is an approximation by excess for the perimeter of the ellipse. Here we have chosen $b=1.1$. We obtain 
\[
D(A_{R_1,1})\approx 832,820208 > 828,919156 \approx D(\Omega_0),
\]
where $D(\Omega_0)=\int_{\partial\Omega_0}w^2 ds$ and $w$ is the fundamental solution defined in \eqref{radial}.

This means that we cannot study separately the numerator and denominator terms to obtain inequality \eqref{iso_into} under perimeter constraint. 
\end{rem}



\section*{Acknowledgements}
This work has been partially supported by the MiUR-PRIN 2017 grant \lq\lq Qualitative and quantitative aspects of nonlinear PDEs\rq\rq, by GNAMPA of INdAM,  by the MiUR-Dipartimenti di Eccellenza 2018-2022 grant \lq\lq Sistemi  distribuiti  intelligenti\rq\rq of  Dipartimento  di  Ingegneria  Elettrica  e dell'Informazione \lq\lq M. Scarano\rq\rq.

Moreover, we would like to thank the reviewer for his/her suggestions to improve this
paper.

\small{

}

\end{document}